\documentclass[11pt]{amsart}%
\usepackage{graphicx}
\usepackage{enumerate}
\usepackage{amsmath}
\usepackage{amsfonts}
\usepackage{amssymb}
\usepackage{url}
\usepackage[margin=1in]{geometry}%
\setcounter{MaxMatrixCols}{30}
\providecommand{\U}[1]{\protect\rule{.1in}{.1in}}
\pagestyle{plain}
\theoremstyle{plain}
\newtheorem{theorem}{Theorem}[section]
\newtheorem{corollary}[theorem]{Corollary}

\newtheorem{example}[theorem]{Example}

\newtheorem{definition}[theorem]{Definition}
\newtheorem{problem}[theorem]{Problem}

\numberwithin{equation}{section}
\begin{document}
\title{Self-concordance is NP-hard}
\author{Lek-Heng~Lim}
\address{Computational and Applied Mathematics Initiative, Department of Statistics,
University of Chicago, Chicago, IL 60637-1514.}
\email{lekheng@galton.uchicago.edu}

\begin{abstract}
We give an elementary proof of a somewhat curious result, namely, that
deciding whether a convex function is self-concordant is in general an
intractable problem.

\end{abstract}
\keywords{self-concordance, second-order self-concordance, NP-hard, co-NP-hard}
\subjclass[2000]{15A69, 68Q17, 90C25, 90C51, 90C60}
\maketitle

\section{Introduction}

Nesterov and Nemirovskii \cite{NN} famously showed that the optimal solution
of a conic programming problem can be computed to $\varepsilon$-accuracy in
polynomial time if the cone has a self-concordant barrier function whose
gradient and Hessian are both computable in polynomial time. Their work
established self-concordance as a singularly important notion in modern
optimization theory.

We show in this article that deciding whether a convex function is
self-concordant at a point is nonetheless an NP-hard problem. In fact we will
prove that deciding the self-concordance of a convex function defined locally
by a cubic polynomial (which cannot be convex on all of $\mathbb{R}^{n}$),
arguably the simplest non-trivial instance, is already an NP-hard problem.
This is not so surprising given that deciding the convexity of a quartic
polynomial, arguably the simplest non-trivial instance of deciding convexity,
is also NP-hard --- a much harder recent result \cite{AOPT}. In addition to
the NP-hardness of self-concordance, we will see that there is no fully
polynomial time approximation scheme for the optimal self-concordant parameter
and that deciding second-order self-concordance \cite{J} of a quartic
polynomial is also an NP-hard problem.

These hardness results are intended only to add to our understanding of
self-concordance. They do not in anyway detract from the usefulness of the
notion since in practice self-concordant barriers are \textit{constructed} at
the outset to have the requisite property \cite[Chapter~5]{NN}. It is unlikely
that one would ever need to generate random functions and then test them for self-concordance.

We deduce the NP-hardness of self-concordance using a result of Nesterov
himself, namely, minimizing a cubic form over a sphere is in general an
NP-hard problem. Nesterov's result, which appears in an unpublished manuscript
\cite{N}, contains some minor errors that have unfortunately been widely
reproduced. We take the opportunity here to correct them and also to prove a
variant of Nesterov's result directly from Motzkin--Straus Theorem \cite{MS}.

\section{Self-concordance in terms of tensors\label{sec:sc}}

Let $\Omega\subseteq\mathbb{R}^{n}$ be open and $f:\Omega\rightarrow
\mathbb{R}$ be in $C^{d}(\Omega)$, i.e., has continuous partials up to at
least order $d$. Recall that the $d$th order derivative of $f$ at $x\in\Omega
$, denoted $\nabla^{d}f(x)$, is a \textit{tensor} of order $d$ \cite{Lang}. To
be more precise, this simply means that $\nabla^{d}f(x)$ is a multilinear
functional on $T_{x}(\Omega)$, the tangent space of $\Omega$ at $x$, that is,%
\[
\nabla^{d}f(x):\underset{d\text{ copies}}{\underbrace{T_{x}(\Omega)\times
\dots\times T_{x}(\Omega)}}\rightarrow\mathbb{R},
\]
where
\begin{multline*}
\nabla^{d}f(x)(h_{1},\dots,\alpha h_{i}+\beta h_{i}^{\prime},\dots
,h_{n})=\alpha\nabla^{d}f(x)(h_{1},\dots,h_{i},\dots,h_{n})\\
+\beta\nabla^{d}f(x)(h_{1},\dots,h_{i}^{\prime},\dots,h_{n})\qquad\text{for
}i=1,\dots,n.
\end{multline*}
With respect to the standard basis $\frac{\partial}{\partial x_{1}}%
,\dots,\frac{\partial}{\partial x_{n}}$ of $T_{x}(\Omega)$, we may identify
$T_{x}(\Omega)\cong\mathbb{R}^{n}$ and $\nabla^{d}f(x)$ may be regarded as a
`$d$-dimensional matrix' ($d$\textit{-hypermatrix} for short),%
\[
\nabla^{d}f(x)=[a_{i_{1}\cdots i_{d}}]_{i_{1},\dots,i_{d}=1}^{n}\in
\mathbb{R}^{n\times\dots\times n}.
\]
Indeed, we must have%
\[
a_{i_{1}\cdots i_{d}}=\frac{\partial^{d}f(x)}{\partial x_{i_{1}}\cdots\partial
x_{i_{d}}},
\]
and since $f\in C^{d}(\Omega)$, we get that $a_{i_{1}\cdots i_{d}%
}=a_{i_{\sigma(1)}\cdots i_{\sigma(d)}}$, i.e., $\nabla^{d}f(x)$ is a
\textit{symmetric} $d$-hypermatrix. Every symmetric $d$-hypermatrix
$A=[a_{i_{1}\cdots i_{d}}]_{i_{1},\dots,i_{d}=1}^{n}\in\mathbb{R}%
^{n\times\dots\times n}$ defines a homogeneous polynomial of degree $d$,
denoted%
\[
A(h,\dots,h):=\sum_{i_{1},\dots,i_{d}=1}^{n}a_{i_{1}\cdots i_{d}}h_{i_{1}%
}\cdots h_{i_{d}}\in\mathbb{R}[h_{1},\dots,h_{n}]_{d}.
\]
Readers should be able to infer from the above discussion that $d$%
-hypermatrices are coordinate representations of $d$-tensors, just as matrices
are coordinate representations of linear operators and bilinear forms (both
are $2$-tensors). We refer the reader to \cite{L} for more information.

The usual definition of self-concordance requires that $f\in C^{3}(\Omega)$
and in which case it is given by a condition involving the matrix $\nabla
^{2}f(x)\in\mathbb{R}^{n\times n}$ and the $3$-hypermatrix $\nabla^{3}%
f(x)\in\mathbb{R}^{n\times n\times n}$.

\begin{definition}
[Nesterov--Nemirovskii]\label{def:sc}Let $\Omega\subseteq\mathbb{R}^{n}$ is a
convex open set. Then $f:\Omega\rightarrow\mathbb{R}$ is said to be
\textbf{self-concordant} with parameter $\sigma>0$ at $x\in\Omega$ if
\begin{equation}
\nabla^{2}f(x)(h,h)\geq0\label{eq:2nd}%
\end{equation}
and%
\begin{equation}
\lbrack\nabla^{3}f(x)(h,h,h)]^{2}\leq4\sigma\lbrack\nabla^{2}f(x)(h,h)]^{3}%
\label{eq:concord}%
\end{equation}
for all $h\in\mathbb{R}^{n}$; $f$ is self-concordant on $\Omega$ if
\eqref{eq:2nd} and \eqref{eq:concord} hold for all $x\in\Omega$. The set of
self-concordant functions on $\Omega$ with parameter $\sigma$ is denoted by
$S_{\sigma}(\Omega)$.
\end{definition}

By \eqref{eq:2nd}, a function self-concordant on $\Omega$ is necessarily
convex on $\Omega$. A minor deviation from \cite{NN} is that $\sigma$ above is
really the reciprocal of the self-concordance parameter as defined in
\cite[Definition~2.1.1]{NN}. Our hardness results would be independent of the
choice of $\sigma$. Note that%
\[
\nabla^{2}f(x)(h,h)=\sum_{i,j=1}^{n}\frac{\partial^{2}f(x)}{\partial
x_{i}\partial x_{j}}h_{i}h_{j},\quad\nabla^{3}f(x)(h,h,h)=\sum_{i,j,k=1}%
^{n}\frac{\partial^{3}f(x)}{\partial x_{i}\partial x_{j}\partial x_{k}}%
h_{i}h_{j}h_{k}.
\]
So for a fixed $x\in\Omega$, $\nabla^{2}f(x)(h,h)$ is a quadratic form in $h$
and $\nabla^{3}f(x)(h,h,h)$ is a cubic form in $h$. It is well-known that
deciding \eqref{eq:2nd} at any fixed $x$ is a polynomial-time problem (but not
so for deciding it over all $x\in\Omega$, see \cite{AOPT}). Hence given a
$\sigma>0$, deciding self-concordance at $x$ essentially boils down to
\eqref{eq:concord}: Is the square of a given cubic form globally bounded above
by the cube of a given quadratic form? We shall see in the next sections that
this decision problem is NP-hard.

While we shall think of $\nabla^{2}f(x)$ as a matrix and $\nabla^{3}f(x)$ as a
hypermatrix thoughout this article, we nonetheless wish to highlight that
condition \eqref{eq:concord} is really a condition on $\nabla^{2}f(x)$
regarded as a $2$-tensor and $\nabla^{3}f(x)$ regarded as a $3$-tensor; that
is, \eqref{eq:concord} is independent of the choice of coordinates, a property
that follows from the \textit{affine invariance} of self-concordance
\cite[Proposition~2.1.1]{NN}. Self-concordance on $\Omega$ is then a global
condition about the tensor fields $\nabla^{3}f$ and $\nabla^{2}f$.

\section{Maximizing a cubic form over a sphere}

We will include a proof that the clique and stability numbers of a graph with
$n$ vertices and $m$ edges may be expressed as the maximal values of cubic
forms (in $n+m$ variables) over the unit sphere $\mathbb{S}^{n+m-1}$. This, or
at least the stability number version, is known but the reference
\cite[Theorem~4]{N} usually cited contains some slight errors that have been
reproduced several times elsewhere\footnote{For example, \cite[Theorem~3.4]%
{dK}. To see that the expression is incorrect, take a graph with three
vertices and one edge, the left-hand side gives $1/\sqrt{2}$ and the
right-hand side gives $1$.}. We take the opportunity to provide a corrected
version below. Our proof follows Motzkin--Straus Theorem \cite{MS} and the
similar result of Nesterov \cite[Theorem~4]{N} for stability number.

Let $G=(V,E)$ be an undirected graph with $n$ vertices and $m$ edges. We shall
require that $E\neq\varnothing$ throughout, so $n\geq2$ and $m\geq1$. Recall
that $S\subseteq V$ is a \textit{clique} in $G$ if $\{i,j\}\in E$ for all
$i,j\in S$ and $S\subseteq V$ is \textit{stable} in $G$ if $\{i,j\}\notin E$
for all $i,j\in S$. The \textit{clique number} and \textit{stability number}
of $G$ are respectively:
\[
\omega(G)=\max\{\lvert S\rvert:S\subseteq V\text{ is clique}\},\quad
\alpha(G)=\max\{\lvert S\rvert:S\subseteq V\text{ is stable}\}.
\]
Motzkin and Straus \cite{MS} showed that $\omega(G)$ may be expressed in terms
of the maximal value of a simple quadratic polynomial over the unit simplex.
Although not in \cite{MS}, it is straightforward to see that essentially the
same proof also yields a similar expression for $\alpha(G)$.

\begin{theorem}
[Motzkin--Straus]\label{thm:ms}Let $\Delta^{n}=\{x\in\mathbb{R}^{n}%
:x_{1}+\dots+x_{n}=1,\;x_{i}\geq0\}$ denote the unit simplex in $\mathbb{R}%
^{n}$. Then the clique number $\omega(G)$ and stability number $\alpha(G)$ may
be determined via quadratic optimization over simplices:%
\begin{equation}
1-\frac{1}{\omega(G)}=2\max_{x\in\Delta^{n}}\sum\nolimits_{\{i,j\}\in E}%
x_{i}x_{j},\quad1-\frac{1}{\alpha(G)}=2\max_{x\in\Delta^{n}}\sum
\nolimits_{\{i,j\}\notin E}x_{i}x_{j}. \label{eq4b}%
\end{equation}

\end{theorem}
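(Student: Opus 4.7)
The plan is to prove the clique identity first and then deduce the stability identity by applying the clique identity to the complement graph $\overline{G}$, using $\alpha(G) = \omega(\overline{G})$ together with the observation that $\{i,j\} \in E(\overline{G})$ iff $\{i,j\} \notin E$ (for $i \neq j$). The degenerate case $G = K_n$, in which $\overline{G}$ has no edges, $\alpha(G) = 1$, and both sides of the stability formula vanish, is handled separately.

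For the clique identity, denote the right-hand side by $M(G)$. The lower bound $M(G) \geq 1 - 1/\omega(G)$ follows from an explicit construction: let $S \subseteq V$ be a maximum clique of size $k = \omega(G)$, and set $x_i = 1/k$ for $i \in S$ and $x_i = 0$ otherwise. Feasibility $x \in \Delta^n$ is clear, and the objective evaluates to $2\binom{k}{2}/k^2 = 1 - 1/k$.

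For the upper bound $M(G) \leq 1 - 1/\omega(G)$, the main tool is a support-reduction argument. Let $x^* \in \Delta^n$ be any maximizer of $Q(x) := \sum_{\{i,j\} \in E} x_i x_j$, and let $S^* = \mathrm{supp}(x^*)$. If there exist $i, j \in S^*$ with $\{i,j\} \notin E$, then, as a function of the single parameter $t$ obtained by replacing $(x_i^*, x_j^*)$ by $(x_i^* + t, x_j^* - t)$ and holding every other coordinate fixed, $Q$ is \emph{affine} in $t$ (the only possible quadratic contribution, $x_i x_j$, is absent because $\{i,j\} \notin E$). Its maximum on $t \in [-x_i^*, x_j^*]$ is therefore attained at an endpoint, and moving to such an endpoint yields a new maximizer whose support is strictly smaller than $S^*$. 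Iterating, one arrives at a maximizer supported on a clique $S'$ of size $s \leq \omega(G)$. For such an $x^*$,
\[
2Q(x^*) = \Bigl(\sum_{i \in S'} x_i^*\Bigr)^{\!2} - \sum_{i \in S'} (x_i^*)^2 = 1 - \sum_{i \in S'} (x_i^*)^2 \leq 1 - \frac{1}{s} \leq 1 - \frac{1}{\omega(G)},
\]
where the penultimate inequality is Cauchy--Schwarz in the form $\sum_{i \in S'} (x_i^*)^2 \geq (\sum_{i \in S'} x_i^*)^2/s = 1/s$.

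The main step to justify carefully is the support-reduction move: one must check that pushing mass to an endpoint truly preserves maximality while strictly shrinking the support, and that the procedure terminates. Once this is in place, the algebraic identity $(\sum x_i)^2 = \sum x_i^2 + 2\sum_{i<j} x_i x_j$ restricted to a clique closes the clique case, and the stability case follows at once by applying the clique identity to $\overline{G}$.
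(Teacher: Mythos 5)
The paper does not actually prove this theorem: it states it with a citation to Motzkin and Straus \cite{MS} and remarks only that the stability version follows by essentially the same argument. Your proof is the classical Motzkin--Straus argument and it is correct. The one step you flag for care --- that pushing mass to an endpoint preserves maximality --- does go through: since $x^*$ is a global maximizer and $t=0$ is an interior point of $[-x_i^*,x_j^*]$ (both $x_i^*$ and $x_j^*$ are positive because $i,j$ lie in the support), the affine function $t\mapsto Q$ is bounded above on the interval by its value at an interior point and is therefore constant, so each endpoint is again a maximizer; at an endpoint exactly one coordinate is driven to zero and no new coordinate enters the support, so the support size strictly decreases and the iteration terminates. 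The remaining algebra on a clique and the deduction of the stability identity by passing to the complement graph (with the edgeless-complement case $G=K_n$ treated separately) are also correct --- note that this last check also confirms that the sum over $\{i,j\}\notin E$ must be read as a sum over \emph{distinct} non-adjacent pairs, which is the intended reading in the statement.
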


Since deciding if a clique of a given size exists is an NP-complete problem
\cite{Ka}, an immediate consequence is that clique number is NP-hard and by
Motzkin--Straus Theorem, so is quadratic maximization over a simplex.

In an unpublished manuscript \cite[Theorem~4]{N}, Nesterov used
Motzkin--Straus Theorem to obtain an alternate expression \eqref{eq7} for
stability number involving the maximal value of a cubic form over a sphere. In
the following we derive a similar expression \eqref{eq6} for the clique
number, which yields slightly simpler expressions for our discussions in
Sections~\ref{sec:NP} and \ref{sec:2sc}, and may perhaps be of independent interest.

\begin{theorem}
[Nesterov]\label{thm:clique}Let $G=(V,E)$ be an undirected graph with $n$
vertices and $m$ edges. Let $\mathbb{S}^{d-1}=\{x\in\mathbb{R}^{d}:\lVert
x\rVert=1\}$ denote the unit $\ell^{2}$-sphere in $\mathbb{R}^{d}$. The clique
number $\omega(G)$ and stability number $\alpha(G)$ may be determined via
cubic optimization over spheres:%
\begin{align}
1-\frac{1}{\omega(G)}  &  =\frac{27}{2}\max_{(u,v)\in\mathbb{S}^{n+m-1}%
}\left[  \sum\nolimits_{\{i,j\}\in E}u_{i}u_{j}w_{ij}\right]  ^{2}%
,\label{eq6}\\
1-\frac{1}{\alpha(G)}  &  =\frac{27}{2}\max_{(u,v)\in\mathbb{S}^{n+m-1}%
}\left[  \sum\nolimits_{\{i,j\}\notin E}u_{i}u_{j}w_{ij}\right]  ^{2}.
\label{eq7}%
\end{align}

\end{theorem}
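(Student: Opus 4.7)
The plan is to reduce \eqref{eq6} and \eqref{eq7} to the Motzkin--Straus identities in \eqref{eq4b} by (i) lifting the simplex optimization to a sphere via the substitution $x_i = u_i^2$, and then (ii) ``linearizing'' the resulting quartic form in $u$ by introducing edge-indexed auxiliary coordinates $v_{ij}$ and exploiting Cauchy--Schwarz. Since \eqref{eq6} and \eqref{eq7} are formally identical up to replacing the edge set $E$ by its complement, I would only spell out the argument for $\omega(G)$.

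First, I would substitute $x_i = u_i^2$ in the Motzkin--Straus formula for $\omega(G)$. As $u$ ranges over $\mathbb{S}^{n-1}$, the vector $(u_1^2,\dots,u_n^2)$ ranges over all of $\Delta^n$, so
\[
1-\frac{1}{\omega(G)}=2\max_{u\in\mathbb{S}^{n-1}}\sum\nolimits_{\{i,j\}\in E}u_{i}^{2}u_{j}^{2}.
\]
This gives a quartic form over a sphere. To rewrite the quartic as the square of a cubic, I would enlarge the ambient space by adding one coordinate $v_{ij}$ for each edge $\{i,j\}\in E$ (so the total dimension is $n+m$) and apply Cauchy--Schwarz: for any fixed $u$,
\[
\max_{\lVert v\rVert=r}\Bigl[\sum\nolimits_{\{i,j\}\in E}u_{i}u_{j}v_{ij}\Bigr]^{2}=r^{2}\sum\nolimits_{\{i,j\}\in E}u_{i}^{2}u_{j}^{2},
\]
with equality attained by $v_{ij}\propto u_iu_j$.

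Next I would parametrize the joint sphere $\mathbb{S}^{n+m-1}$ by the split $\lVert u\rVert^{2}=t$, $\lVert v\rVert^{2}=1-t$ with $t\in[0,1]$, writing $u=\sqrt{t}\,\widetilde{u}$ for $\widetilde{u}\in\mathbb{S}^{n-1}$. Then $u_{i}^{2}u_{j}^{2}=t^{2}\widetilde{u}_{i}^{2}\widetilde{u}_{j}^{2}$, so the previous display becomes $t^{2}(1-t)\sum \widetilde{u}_i^{2}\widetilde{u}_j^{2}$. Maximizing the scalar factor $t^2(1-t)$ over $t\in[0,1]$ gives the value $4/27$ at $t=2/3$. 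Combining,
\[
\max_{(u,v)\in\mathbb{S}^{n+m-1}}\Bigl[\sum\nolimits_{\{i,j\}\in E}u_{i}u_{j}v_{ij}\Bigr]^{2}=\frac{4}{27}\max_{\widetilde{u}\in\mathbb{S}^{n-1}}\sum\nolimits_{\{i,j\}\in E}\widetilde{u}_{i}^{2}\widetilde{u}_{j}^{2}=\frac{4}{27}\cdot\frac{1}{2}\left(1-\frac{1}{\omega(G)}\right),
\]
which rearranges to \eqref{eq6}. The identity \eqref{eq7} follows by the same argument applied to the second half of \eqref{eq4b}, replacing $E$ with its complement.

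I do not foresee a serious obstacle: the only things one must check carefully are that $u\mapsto(u_1^2,\dots,u_n^2)$ maps $\mathbb{S}^{n-1}$ \emph{onto} $\Delta^n$ (so the substitution does not shrink the feasible set and no extraneous maximizer is introduced), and that the auxiliary maximization over $v$ is consistent with the joint sphere constraint, which is where the scalar optimization of $t^2(1-t)$ enters and fixes the mysterious constant $27/2$. Everything else is bookkeeping.
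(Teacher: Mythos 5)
Your proposal is correct and follows essentially the same route as the paper: Motzkin--Straus, the substitution $x_i=u_i^2$ to pass from the simplex to the sphere, Cauchy--Schwarz with one auxiliary coordinate per edge, and an optimization of the norm split between the $u$- and $v$-blocks (your $t^2(1-t)$ with maximum $4/27$ at $t=2/3$ is just the square of the paper's $\beta\sqrt{1-\beta}$ with maximum $2/(3\sqrt{3})$ at $\beta=2/3$). The only cosmetic difference is that you fold the maximization over $v$ into the joint-sphere parametrization rather than first establishing equality over the product of spheres $\mathbb{S}^{n-1}\times\mathbb{S}^{m-1}$ and then rescaling.
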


\begin{proof}
This follows from Motzkin--Straus Theorem and the equalities%
\begin{align}
\max_{x\in\Delta^{n}}\sum\nolimits_{\{i,j\}\in E}x_{i}x_{j}  &  =\max
_{u\in\mathbb{S}^{n-1}}\sum\nolimits_{\{i,j\}\in E}u_{i}^{2}u_{j}%
^{2}\label{eq4i}\\
&  =\max_{u\in\mathbb{S}^{n-1},w\in\mathbb{S}^{m-1}}\left[  \sum
\nolimits_{\{i,j\}\in E}u_{i}u_{j}w_{ij}\right]  ^{2}\label{eq4ii}\\
&  =\frac{27}{4}\max_{(u,v)\in\mathbb{S}^{n+m-1}}\left[  \sum
\nolimits_{\{i,j\}\in E}u_{i}u_{j}w_{ij}\right]  ^{2}. \label{eq4iii}%
\end{align}
\eqref{eq4i} comes from substituting $x_{i}=u_{i}^{2}$, $i=1,\dots,n$.
Cauchy-Schwartz yields
\[
\sum\nolimits_{\{i,j\}\in E}u_{i}u_{j}w_{ij}\leq\left(  \sum
\nolimits_{\{i,j\}\in E}u_{i}^{2}u_{j}^{2}\right)  ^{1/2}\left(
\sum\nolimits_{\{i,j\}\in E}w_{ij}^{2}\right)  ^{1/2}%
\]
and so%
\begin{align}
\max_{\lVert u\rVert=\lVert w\rVert=1}\sum\nolimits_{\{i,j\}\in E}u_{i}%
u_{j}w_{ij}  &  \leq\max_{\lVert u\rVert=1}\left(  \sum\nolimits_{\{i,j\}\in
E}u_{i}^{2}u_{j}^{2}\right)  ^{1/2}\max_{\lVert w\rVert=1}\left(
\sum\nolimits_{\{i,j\}\in E}w_{ij}^{2}\right)  ^{1/2}\nonumber\\
&  =\max_{\lVert u\rVert=1}\left(  \sum\nolimits_{\{i,j\}\in E}u_{i}^{2}%
u_{j}^{2}\right)  ^{1/2}=:\alpha. \label{eq3}%
\end{align}
Let the maximal value $\alpha$ be attained at $\overline{u}\in\mathbb{S}%
^{n-1}$. We set $\overline{w}_{ij}=\overline{u}_{i}\overline{u}_{j}/\alpha$
for all $\{i,j\}\in E$ (note that $\alpha>0$ if $E\neq\varnothing$). Observe
that%
\[
\sum\nolimits_{\{i,j\}\in E}\overline{u}_{i}\overline{u}_{j}\overline{w}%
_{ij}=\frac{1}{\alpha}\sum\nolimits_{\{i,j\}\in E}\overline{u}_{i}%
^{2}\overline{u}_{j}^{2}=\alpha,
\]
and $\overline{w}\in\mathbb{S}^{m-1}$ since%
\[
\sum\nolimits_{\{i,j\}\in E}\overline{w}_{ij}^{2}=\frac{1}{\alpha^{2}}%
\sum\nolimits_{\{i,j\}\in E}\overline{u}_{i}^{2}\overline{u}_{j}^{2}=1.
\]
Hence equality is attained in \eqref{eq3} and we have \eqref{eq4ii}. We deduce
\eqref{eq4iii} from%
\begin{align*}
&  \max_{\lVert(u,w)\rVert=1}\sum\nolimits_{\{i,j\}\in E}u_{i}u_{j}w_{ij} =
\max_{\lVert u\rVert^{2}+\lVert w\rVert^{2}=1}\sum\nolimits_{\{i,j\}\in
E}u_{i}u_{j}w_{ij}\\
=  &  \sup_{\beta\in(0,1)}\left[  \max_{\lVert u\rVert^{2}=\beta,\;\lVert
w\rVert^{2}=1-\beta}\sum\nolimits_{\{i,j\}\in E}u_{i}u_{j}w_{ij}\right] \\
=  &  \sup_{\beta\in(0,1)}\left[  \max_{\lVert u/\sqrt{\beta}\rVert
^{2}=1,\;\lVert w/\sqrt{1-\beta}\rVert^{2}=1}\sum\nolimits_{\{i,j\}\in E}%
\frac{u_{i}}{\sqrt{\beta}}\frac{u_{j}}{\sqrt{\beta}}\frac{w_{ij}}%
{\sqrt{1-\beta}}\times\beta\sqrt{1-\beta}\right] \\
=  &  \left[  \max_{\lVert u\rVert^{2}=1,\;\lVert w\rVert^{2}=1}%
\sum\nolimits_{\{i,j\}\in E}u_{i}u_{j}w_{ij}\right]  \times\sup_{\beta
\in(0,1)}\beta\sqrt{1-\beta}\\
=  &  \frac{2}{3\sqrt{3}}\max_{\lVert u\rVert=\lVert w\rVert=1}\sum
\nolimits_{\{i,j\}\in E}u_{i}u_{j}w_{ij}.
\end{align*}
Note that the maximal value of $\sum\nolimits_{\{i,j\}\in E}u_{i}u_{j}w_{ij}$,
whether over $\mathbb{S}^{n-1}\times\mathbb{S}^{m-1}$ or over $\mathbb{S}%
^{n+m-1}$, can always be attained with $u\geq0$ and $w\geq0$, thereby allowing
one to take square in \eqref{eq4ii} and \eqref{eq4iii}. The same proof works
word-for-word for stability number with the replacement of index of summation
`$\{i,j\}\in E$' by `$\{i,j\}\notin E$'.
\end{proof}

\section{Complexity theory for casual users\label{sec:ct}}

In this article, we use complexity classes defined in the standard classical
framework: Time complexity measured in bits with the Cook-Karp-Levin notions
of reducibility \cite{Co,Ka,Le} on a Turing machine \cite{Tu}. This is also
the most common framework for discussing complexity issues in optimization
\cite{V}. We briefly recall the intuitive ideas behind the complexity classes
used in this article for our readers. This is of course not meant to be
anywhere near a rigorous treatment; for that, see \cite{Papa, Sip}.

A decision problem, i.e., answer is \textsc{yes} or \textsc{no}, is in NP if
one can verify an \textsc{yes} answer in polynomial time; it is said to be in
co-NP if one can verify a \textsc{no} answer in polynomial time. A decision
problem in NP is said to be NP-complete if one can reduce any other problem in
NP to it. Likewise, a decision problem in co-NP is said to be co-NP-complete
if one can reduce any other problem in co-NP to it. NP-complete and
co-NP-complete problems are believed to be intractable.

\begin{example}
[Subset sum problem \cite{knapsack}]Let $S$ be a finite set of integers. The
problem ``Does $S$ have a non-empty subset with a zero sum?'' is NP-complete
--- one can check a purported \textsc{yes} answer, i.e., a nonempty subset of
integers, is indeed a \textsc{yes} answer, i.e., has a zero sum. The problem
``Does every non-empty subset of $S$ have a nonzero sum?'' is co-NP-complete
--- one can check a purported \textsc{no} answer, i.e., a nonempty subset of
integers, is indeed a \textsc{no} answer, i.e., has a nonzero sum. Note that
the two problems are logical complements of each other. This is in fact
another way to define the co-NP class, namely, it comprises problems that are
logical complements of NP problems.
\end{example}

A problem is said to be \textit{NP-hard} \cite{Kn1,Kn2} if one can reduce any
NP-complete decision problem to it. A problem is said to be
\textit{co-NP-hard} if one can reduce any co-NP-complete decision problem to
it. In other words, if one can solve an NP-hard problem, then one can solve
any NP (including NP-complete) problems; if one can solve a co-NP-hard
problem, then one can solve any co-NP (including co-NP-complete) problems. So
NP-hard problems are believe to be even harder than NP-complete problem and
co-NP-hard problems are believed to be even more intractable than
co-NP-complete problems. An NP-hard problem need not be in NP, i.e., one need
not be able to check an \textsc{yes} answer in polynomial time; an NP-hard
problem that is in NP is by definition NP-complete. Likewise a co-NP-hard
problem need not be in co-NP, i.e., one need not be able to check a
\textsc{no} answer in polynomial time; a co-NP-hard problem that is in co-NP
is by definition co-NP-complete.

We have used the term `reduce' without stating what it meant. There are in
fact two different notions: The \textit{Cook reduction} (also known as
polynomial-time Turing reduction) and the \textit{Karp reduction} (also known
as polynomial-time many-one reduction). For our purpose, all we need to know
is that Cook reduction is believed to be a stronger notion of reducibility
than Karp reduction: Under Cook reduction, every decision problem can be
reduced to its complement. A consequence is that with respect to Cook
reducibility, there is no distinction between NP and co-NP, NP-complete and
co-NP-complete, or NP-hard and co-NP-hard. Under Karp reduction, it is
believed (although unknown) that these classes are all distinct.

Given an NP-hard maximization problem, e.g., the ones in Theorems~\ref{thm:ms}
and \ref{thm:clique}, a \textit{polynomial time approximation scheme}
(\textsc{ptas}) is an algorithm that, for any fixed $\varepsilon>0$, would
produce an approximate solution within a factor of $1-\varepsilon$ of the
maximal value, and has running time polynomial in input size and $\varepsilon$.

\section{Complexity of deciding self-concordance\label{sec:NP}}

The recent resolution of Shor's conjecture by Ahmadi, Olshevsky, Parrilo, and
Tsitsiklis \cite{AOPT} shows that deciding the convexity of a quartic
polynomial globally over $\mathbb{R}^{n}$ is NP-hard. So the self-concordance
of a function that is not a priori known to be convex is NP-hard in a trivial
way since deciding whether \eqref{eq:2nd} holds for all $x\in\Omega$ in
Definition~\ref{def:sc} is already an NP-hard problem. Our complexity result
requires more stringent conditions: (i) Our functions are assumed to be convex
in $\Omega$ and so \eqref{eq:2nd} is always satisfied and self-concordance
reduces to checking \eqref{eq:concord}; (ii) We show that \eqref{eq:concord}
is already NP-hard to check at a single point $x\in\Omega$.

Throughout the following we shall require the \textit{inputs} to our problems
to take rational or finite-extensions\footnote{In this article we only
encounter simple quadratic and cubic extensions, cf.\ \eqref{eq:quad} and
\eqref{eq:cube}. Note that for positive $q\in\mathbb{Q}$, elements of
$\mathbb{Q}(\sqrt{q})$ and $\mathbb{Q}(\sqrt[3]{q})$ may be written as
$a+b\sqrt{q}$ and $a+b\sqrt[3]{q}+c(\sqrt[3]{q})^{2}$ respectively. Therefore
they may be represented by pairs and triples of rational numbers.} of rational
values, e.g., $A\in\mathbb{Q}^{n\times n\times n},q\in\mathbb{Q}$, to ensure a
finite bit-length input. Note however that an NP-hard problem may not be in
the class NP; so an NP-hard decision problem can be posed over the reals,
e.g., `Is there an $h\in\mathbb{R}^{n}$ such that $[A(h,h,h)]^{2}\leq
q[h^{\top}h]^{3}$ holds?' without causing any issue since it is not required
to have a polynomial-time checkable certificate.

We will now formulate a decision problem that will lead us to the requisite
NP-hardness of self-concordance. Let $G=(V,E)$ be an undirected graph with $n$
vertices and $m$ edges where $n\geq2$ and $m\geq1$. Let $E=\{\{i_{k}%
,j_{k}\}:k=1,\dots,m\}$. Define $A_{G}=[a_{ijk}]_{i,j,k=1}^{n+m}\in
\mathbb{Q}^{(n+m)\times(n+m)\times(n+m)}$ by%
\[
a_{ijk}=%
\begin{cases}
1 & \{i_{k},j_{k}\}\in E\text{,}\\
0 & \text{otherwise.}%
\end{cases}
\]
Note that $A_{G}$ is a symmetric hypermatrix, i.e.,%
\[
a_{ijk}=a_{ikj}=a_{jik}=a_{jki}=a_{kij}=a_{kji}%
\]
for all $i,j,k\in\{1,\dots,n+m\}$. Let us denote the coordinates of
$h\in\mathbb{R}^{n+m}$ as
\[
h=(u_{1},\dots,u_{n},w_{i_{1}j_{1}},\dots
,w_{i_{m}j_{m}}).
\]
In which case,
\[
A_{G}(h,h,h)=\sum\nolimits_{k=1}^{m}u_{i_{k}}u_{j_{k}}w_{i_{k}j_{k}}%
=\sum\nolimits_{\{i,j\}\in E}u_{i}u_{j}w_{ij},
\]
and so by Theorem~\ref{thm:clique},%
\begin{equation}
\max_{h\neq0}\left[  \frac{A_{G}(h,h,h)}{\lVert h\rVert^{3}}\right]  ^{2}%
=\max_{\lVert h\rVert=1}[A_{G}(h,h,h)]^{2}=\frac{2}{27}\left(  1-\frac
{1}{\omega(G)}\right)  .\label{eq8}%
\end{equation}
The \textsc{clique} problem asks if for a given graph $G$ and a given
$k\in\mathbb{N}$, whether $G$ has a clique of size $k$? \textsc{clique} is
well-known to be NP-complete \cite{Ka}. In other words, deciding if
$\omega(G)\geq k$, or equivalently, $\omega(G)>k-1$, is an NP-hard problem;
and by \eqref{eq8}, so is deciding if%
\begin{equation}
\max_{h\neq0}\left[  \frac{A_{G}(h,h,h)}{\lVert h\rVert^{3}}\right]
^{2}>\frac{2}{27}\left(  1-\frac{1}{k-1}\right)  .\label{eq9}%
\end{equation}
Strictly speaking, we have restricted ourselves to the subclass of undirected
graphs with at least two vertices and one edge --- clearly \textsc{clique} is
still NP-complete for this slightly smaller class. But with this restriction,
we may assume that $k\geq2$ and therefore the right-hand side of \eqref{eq9}
is always defined.

Hence deciding if there exists an $h\in\mathbb{R}^{n+m}$ for which%
\[
\lbrack A_{G}(h,h,h)]^{2}>\frac{2}{27}\left(  1-\frac{1}{k-1}\right)
[h^{\top}h]^{3}%
\]
is an NP-hard problem. Note that $q=\frac{2}{27}[1-(k-1)^{-1}]\in\mathbb{Q}$
and so this problem is of the form:

\begin{problem}
\label{prob:clique}Given a symmetric $A\in\mathbb{Q}^{(n+m)\times
(n+m)\times(n+m)}$ and a positive $q\in\mathbb{Q}$, is it true that there
exists $h\in\mathbb{R}^{n+m}$ for which $[A(h,h,h)]^{2}>q[h^{\top}h]^{3}$?
\end{problem}

Let $\sigma\in\mathbb{Q}$, $\sigma>0$, be a self-concordance parameter and let%
\begin{equation}
\gamma:=\frac{1}{3}\left[  \frac{1}{2\sigma}\left(  1-\frac{1}{k-1}\right)
\right]  ^{1/3}.\label{eq:cube}%
\end{equation}
We follow the notation in Section~\ref{sec:sc}. Let $\Omega$ be the
$\varepsilon$-ball $B_{\varepsilon}(0)=\{x\in\mathbb{R}^{n}:\lVert
x\rVert<\varepsilon\}$ where $\varepsilon>0$ is to be chosen later. We are
interested in deciding self-concordance at $x=0$ of the cubic polynomial
$f:\Omega\rightarrow\mathbb{R}$ defined by%
\[
f(x)=\frac{\gamma}{2}x^{\top}x+A_{G}(x,x,x)=\frac{\gamma}{2}\sum
\nolimits_{i=1}^{n+m}x_{i}^{2}+\sum\nolimits_{i,j,k=1}^{n+m}a_{ijk}x_{i}%
x_{j}x_{k}.
\]
We have $\nabla^{2}f(0)=\gamma I$ where $I$ is the $(n+m)\times(n+m)$ identity
matrix. Since $\gamma>0$, i.e., $\nabla^{2}f(x)$ is strictly positive definite
in a neighborhood of $x=0$ and so there exists some $B_{\varepsilon}(0)$ on
which $f$ is convex --- this gives us our choice of $\varepsilon$. Also,
$\nabla^{3}f(0)=A_{G}$.

Hence $\nabla^{2}f(0)(h,h)=\gamma h^{\top}h=\gamma\lVert h\rVert_{2}^{2}$,
$\nabla^{3}f(0)(h,h,h)=A_{G}(h,h,h)$, and $f$ is self-concordant at the origin
with parameter $\sigma\in\mathbb{Q}$ if and only if%
\[
\lbrack A_{G}(h,h,h)]^{2}\leq4\sigma\gamma^{3}[h^{\top}h]^{3}=\frac{2}%
{27}\left(  1-\frac{1}{k-1}\right)  [h^{\top}h]^{3}%
\]
for all $h\in\mathbb{R}^{n+m}$. Note that this problem is of the form:

\begin{problem}
\label{prob:sc}Given a symmetric $A\in\mathbb{Q}^{(n+m)\times(n+m)\times
(n+m)}$ and a positive $q\in\mathbb{Q}$, is it true that for every
$h\in\mathbb{R}^{n+m}$, we have $[A(h,h,h)]^{2}\leq q[h^{\top}h]^{3}$?
\end{problem}

Mathematically, Problems~\ref{prob:clique} and \ref{prob:sc} are of course
equivalent, being logical complements of each other. However they may or may
not have the same computational complexity. By our discussion in
Section~\ref{sec:ct}, if our notion of reduction is the Cook reduction, then
we may indeed conclude that Problems~\ref{prob:clique} and \ref{prob:sc} are
equivalent in terms of computational complexity, i.e., deciding
self-concordance is NP-hard. However, if our notion of reduction is the Karp
reduction, then what we may deduce from the NP-hardness of
Problem~\ref{prob:clique} is that Problem~\ref{prob:sc} is co-NP-hard. In
either case, our conclusion is that self-concordance is intractable.

\begin{theorem}
\label{thm:main}Deciding whether a cubic polynomial is self-concordant at the
origin is NP-hard under Cook reduction and co-NP-hard under Karp reduction.
\end{theorem}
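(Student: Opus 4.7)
The plan is to package the reduction that the preceding narrative has essentially already laid out, turning CLIQUE into a self-concordance decision question through the hypermatrix $A_G$ and the auxiliary cubic polynomial $f$. First, I would fix a graph $G=(V,E)$ with $n\geq 2$ vertices and $m\geq 1$ edges, an integer $k\geq 2$, and form the symmetric hypermatrix $A_G\in\mathbb{Q}^{(n+m)\times(n+m)\times(n+m)}$ as defined above, so that $A_G(h,h,h)=\sum_{\{i,j\}\in E}u_iu_jw_{ij}$. Theorem~\ref{thm:clique}, in the form of equation \eqref{eq8}, then identifies the condition $\omega(G)\geq k$ with the inequality \eqref{eq9}, exhibiting Problem~\ref{prob:clique} as NP-hard via the standard NP-completeness of \textsc{clique}.

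Next, for an arbitrary fixed rational $\sigma>0$, I would let $\gamma$ be the rational-in-$\mathbb{Q}(\sqrt[3]{\cdot})$ constant defined in \eqref{eq:cube} and consider the cubic polynomial $f(x)=\frac{\gamma}{2}x^{\top}x+A_G(x,x,x)$ on an $\varepsilon$-ball $B_\varepsilon(0)$ around the origin. Direct computation gives $\nabla^2 f(0)=\gamma I$ and $\nabla^3 f(0)=A_G$, so \eqref{eq:2nd} holds on $B_\varepsilon(0)$ provided $\varepsilon$ is small enough for the strictly positive definite Hessian at the origin to dominate the linear-in-$x$ perturbation coming from $\nabla^2 A_G(x,x,x)$. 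With this choice of $\varepsilon$, self-concordance of $f$ at $0$ with parameter $\sigma$ reduces, by substituting into \eqref{eq:concord} and the definition of $\gamma$, to the inequality
\[
[A_G(h,h,h)]^2\leq\tfrac{2}{27}\bigl(1-\tfrac{1}{k-1}\bigr)[h^{\top}h]^3\qquad\text{for all }h\in\mathbb{R}^{n+m},
\]
which is precisely Problem~\ref{prob:sc} applied to $(A_G,\,q=\tfrac{2}{27}(1-1/(k-1)))$.

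To finish, I would observe that Problem~\ref{prob:sc} is literally the logical complement of Problem~\ref{prob:clique}. Under Cook reduction, complementation is free (Section~\ref{sec:ct}), so the NP-hardness of Problem~\ref{prob:clique} immediately transfers to Problem~\ref{prob:sc} and hence to deciding self-concordance at the origin of the cubic polynomial $f$; under Karp reduction the same reduction delivers co-NP-hardness. The main obstacle, and the only point that requires real (as opposed to bookkeeping) care, is the convexity step: one must be sure that the $\varepsilon$-ball can be exhibited so that $f$ is actually convex on it, so that the definition of self-concordance applies and \eqref{eq:2nd} does not become an additional constraint on $h$. Since $\nabla^2 f(x)=\gamma I+6\,[A_G x]$ depends linearly on $x$ with $A_G$ having bounded entries, any $\varepsilon<\gamma/(6\|A_G\|)$ suffices, and such an $\varepsilon$ may be chosen rationally, keeping the whole instance of finite bit-length. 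The remaining steps are mechanical: matching the constant $4\sigma\gamma^3$ against $\tfrac{2}{27}(1-1/(k-1))$ using \eqref{eq:cube}, and verifying that $\gamma\in\mathbb{Q}(\sqrt[3]{\cdot})$ and $A_G\in\mathbb{Q}^{(n+m)^3}$ ensure polynomial-size rational input.
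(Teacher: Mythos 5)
Your proposal is correct and follows essentially the same route as the paper: the same hypermatrix $A_G$, the same cubic $f(x)=\frac{\gamma}{2}x^{\top}x+A_G(x,x,x)$ with $\gamma$ from \eqref{eq:cube}, the same matching of $4\sigma\gamma^3$ against $\frac{2}{27}(1-\frac{1}{k-1})$, and the same Cook-versus-Karp complementation argument linking Problems~\ref{prob:clique} and \ref{prob:sc}. Your explicit bound $\varepsilon<\gamma/(6\lVert A_G\rVert)$ for the convexity neighborhood is a small useful addition that the paper leaves implicit.
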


The argument in this section clearly works not just for cubic polynomials but
for any $f\in C^{3}(\Omega)$ as long as $0\in\Omega$, $\nabla^{2}f(0)=\gamma
I$, and $\nabla^{3}f(0)=A_{G}$ --- other derivatives and the remainder term in
the Taylor expansion of $f$ at $x=0$ may be chosen arbitrarily as long as $f$
stays convex in $\Omega$. This liberty allows one to extend the construction
above to functions with other desired properties. For instance, we may want an
example where $\Omega=\mathbb{R}^{n}$ and since cubic polynomials cannot be
convex on the whole of $\mathbb{R}^{n}$, we will need a quartic $f$ and
therefore need to choose $\nabla^{4}f(0)$ accordingly; or we may want an
example where $f$ is a barrier function, which is equivalent to $f$ having an
epigraph $\{(x,t)\in\mathbb{R}^{n+1}:x\in\Omega,\;f(x)\leq t\}$ that is
closed. One may trivially replace $0$ by any point $a\in\mathbb{R}^{n}$ by
considering the function $f_{a}(x)=f(x-a)$ on $\Omega=B_{\varepsilon}(a)$.

While we have proved our hardness result for functions on $\Omega
\subseteq\mathbb{R}^{n}$, it is easy to extend this to any $\mathbb{R}$-vector
space, for example, symmetric matrices $\mathbb{S}^{n\times n}$ or polynomials
$\mathbb{R}[x_{1},\dots,x_{n}]$, or even Riemannian manifolds with a
non-trivial class of geodesically convex functions (i.e., not just the
constant functions). Since self-concordance at a point is a local property, a
choice of coordinate patch would transform the problem to one over
$\mathbb{R}^{n}$; and by our remark at the end of Section~\ref{sec:sc}, it
will in fact be independent of our choice of coordinates.

Deciding self-concordance on the whole of $\Omega$ is of course at least as
hard as deciding self-concordance at a point in $\Omega$ and hence we have the following.

\begin{corollary}
For any $\Omega$ and any $\sigma>0$, deciding membership in $S_{\sigma}%
(\Omega)$ is NP-hard.
\end{corollary}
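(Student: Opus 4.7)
The plan is to reduce the pointwise NP-hard problem of Theorem~\ref{thm:main} to global membership in $S_\sigma(\Omega)$, using the observation that self-concordance on $\Omega$ includes the pointwise condition at every $x\in\Omega$. Fix an arbitrary convex open $\Omega\subseteq\mathbb{R}^n$ and $\sigma>0$; pick $a\in\Omega$ and $\varepsilon>0$ with $\overline{B_{\varepsilon}(a)}\subset\Omega$. Given a \textsc{Clique} instance $(G,k)$, form the translated cubic
\[
f_G(x)=\tfrac{\gamma}{2}\lVert x-a\rVert^{2}+A_G(x-a,x-a,x-a),
\]
exactly as in the proof of Theorem~\ref{thm:main}, with $\gamma$ chosen via \eqref{eq:cube} so that self-concordance of $f_G$ at $a$ with parameter $\sigma$ is equivalent to $\omega(G)<k$.

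Next, I would use the liberty highlighted in the remark following Theorem~\ref{thm:main}: freely add higher-order terms (quartic and above) outside a smaller ball $B_\delta(a)$, $\delta<\varepsilon$, to produce $\tilde f\in C^{3}(\Omega)$ such that (i) $\tilde f$ is convex on all of $\Omega$, (ii) $\tilde f$ coincides with $f_G$ on $B_{\delta}(a)$, and (iii) $\nabla^{2}\tilde f(x)$ is made so large on $\Omega\setminus B_\delta(a)$ that \eqref{eq:concord} is strictly satisfied there (for the fixed $\nabla^{3}\tilde f$, which is bounded). If $\delta$ is small then, by continuity of $\nabla^{2}\tilde f$ and constancy of $\nabla^{3}\tilde f\equiv A_G$ on $B_\delta(a)$, the condition \eqref{eq:concord} at every $x\in B_\delta(a)$ reduces to the condition at $x=a$. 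Since any $\sigma>0$ can be matched by the scaling $\tilde f\mapsto c\tilde f$ (which sends $\sigma\mapsto c\sigma$), we may assume the prescribed $\sigma$ is attained.

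With these arrangements, $\tilde f\in S_\sigma(\Omega)$ if and only if the pointwise inequality at $a$ holds, if and only if $\omega(G)<k$. A polynomial-time algorithm for $S_\sigma(\Omega)$-membership would therefore solve \textsc{Clique}, so membership in $S_\sigma(\Omega)$ is NP-hard under Cook reduction and co-NP-hard under Karp reduction, as claimed.

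The main obstacle is the tight case $\omega(G)=k-1$, where \eqref{eq:concord} at $a$ becomes equality for some extremal $h\in\mathbb{R}^{n+m}$, and one must be sure that at nearby $x\in B_{\delta}(a)$ the inequality is not strictly more stringent. This can be handled by shrinking $\delta$ (exploiting that $\nabla^{3}\tilde f$ is literally constant on $B_\delta(a)$ while $\nabla^{2}\tilde f(x)\to\gamma I$ as $x\to a$), or, alternatively, by slightly inflating $\sigma$ in the reduction to guarantee that YES instances satisfy \eqref{eq:concord} with a strict buffer so that continuity alone suffices. Either way, the construction of $\tilde f$ with the required control of the Hessian outside $B_\delta(a)$—namely a smooth, convex, globally defined extension whose self-concordance is uniformly strict away from $a$—is the only technically delicate step; all other ingredients are direct consequences of Theorem~\ref{thm:main} and the affine-invariant, pointwise nature of self-concordance.
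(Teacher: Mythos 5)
Your overall strategy is the same as the paper's: the paper disposes of this corollary in one sentence (``deciding self-concordance on the whole of $\Omega$ is at least as hard as deciding it at a point''), relying on the preceding remark that the higher-order terms of $f$ may be chosen freely so long as $f$ stays convex on $\Omega$. You are in fact more careful than the paper in trying to turn that sentence into an actual reduction, and you correctly isolate the two points where care is needed: extending $f_G$ to a function that is convex and safely self-concordant on all of $\Omega\setminus B_\delta(a)$, and controlling the condition \eqref{eq:concord} at points of $B_\delta(a)$ other than $a$ itself.

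One of your two proposed fixes for the tight case is, however, genuinely wrong: shrinking $\delta$ does not help. On $B_\delta(a)$ you have $\nabla^2\tilde f(x)(h,h)=\gamma\lVert h\rVert^2+6A_G(x-a,h,h)$, and the perturbation $6A_G(x-a,h^*,h^*)$ is \emph{linear} in $x-a$; when $\omega(G)=k-1$ and $h^*$ is the extremal direction achieving equality in \eqref{eq:concord} at $a$, one has $A_G(h^*,h^*,h^*)\neq 0$, so moving $x$ from $a$ in the direction $-\operatorname{sgn}(A_G(h^*,h^*,h^*))\,h^*$ strictly decreases the right-hand side of \eqref{eq:concord} while the left-hand side is unchanged. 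Thus the global condition fails at points arbitrarily close to $a$, for \emph{every} $\delta>0$. Your second fix is the correct one and should be the only one you keep: because $\omega(G)$ is an integer, the quantity $\max_{\lVert h\rVert=1}[A_G(h,h,h)]^2$ takes values in the discrete set $\{\tfrac{2}{27}(1-\tfrac1r)\}$, so you can place the threshold $4\sigma\gamma^3$ strictly inside the gap between $\tfrac{2}{27}(1-\tfrac{1}{k-1})$ and $\tfrac{2}{27}(1-\tfrac{1}{k})$; then YES instances satisfy \eqref{eq:concord} at $a$ with a quantitative multiplicative buffer, and a computable choice of $\delta$ (depending on $k$ and $\lVert A_G\rVert$) makes continuity suffice on $B_\delta(a)$. (Alternatively, one can observe that even with the exact threshold the reduction still decides the existence of a $(k-1)$-clique, which is equally NP-hard.) Finally, the construction of the convex extension $\tilde f$ with \eqref{eq:concord} uniformly strict on $\Omega\setminus B_\delta(a)$ is asserted rather than carried out; note that inflating the Hessian there also perturbs $\nabla^3\tilde f$, so one must check that the cube on the right of \eqref{eq:concord} dominates the square on the left --- plausible, but it is precisely the step the paper also leaves implicit, and it deserves an explicit construction if the corollary is to be read literally for arbitrary (e.g.\ unbounded) $\Omega$.
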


One may wonder why our conclusion in Theorem~\ref{thm:main} is stated as NP-
and co-NP-hardness as opposed to NP- and co-NP-completeness. It may appear
that given a \textsc{no} certificate $h\in\mathbb{R}^{n}$, it would be easy
(i.e., requires polynomial time) to decide whether $[A(h,h,h)]^{2}\leq
q[h^{\top}h]^{3}$ is indeed violated. But observe that it is only easy to
compute the quantities $[A(h,h,h)]^{2}$, $q[h^{\top}h]^{3}$, and compare their
magnitudes when we measure time complexity in units of \textit{real}
operations (i.e., arithmetic and ordering in $\mathbb{R}$). Since we measure
time complexity in units of \textit{bit} operations, even if the certificate
$h$ is in $\mathbb{Q}^{n}$, it could well have an exponential number of bits
and thus it is not at all clear that we may check $[A(h,h,h)]^{2}\leq
q[h^{\top}h]^{3}$ easily.

\section{Inapproximability of optimal self-concordance parameter}

Let $A\in\mathbb{Q}^{n\times n\times n}$ be symmetric and $f:\Omega
\rightarrow\mathbb{R}$ be defined by the cubic polynomial $f(x)=\frac{1}%
{2}x^{\top}x+A(x,x,x)$. As in Section~\ref{sec:NP}, $\Omega$ is chosen to be a
neighborhood of the origin so that $f$ is convex on $\Omega$. The condition
\eqref{eq:concord} for self-concordance of $f$ at $x=0$ with parameter
$\sigma>0$ may be written as
\begin{equation}
\lvert A(h,h,h)\rvert\leq2\sqrt{\sigma}\lVert h\rVert_{2}^{3} \label{eq1}%
\end{equation}
for all $h\in\mathbb{R}^{n}$. This is equivalent to requiring%
\begin{equation}
\max_{h\neq0}\frac{A(h,h,h)}{\lVert h\rVert_{2}^{3}}\leq2\sqrt{\sigma},
\label{eq2}%
\end{equation}
as $A(-h,-h,-h)=-A(h,h,h)$ and we may drop the absolute value in \eqref{eq1}.

Since $A\in\mathbb{R}^{n\times n\times n}$ is a symmetric $3$-hypermatrix, the
\textit{spectral norm} \cite{HL,L} of $A$,%
\[
\lVert A\rVert_{2,2,2}:=\max_{h_{1},h_{2},h_{3}\neq0}\frac{A(h_{1},h_{2}%
,h_{3})}{\lVert h_{1}\rVert_{2}\lVert h_{2}\rVert_{2}\lVert h_{3}\rVert_{2}%
}=\max_{h\neq0}\frac{A(h,h,h)}{\lVert h\rVert_{2}^{3}}.
\]
For the interested reader, the second equality above follows from Banach's
result on the polarization constant of Hilbert spaces \cite{B,PST}. Hence the
optimal self-concordance parameter of $f$ at $x=0$, i.e., the smallest value
of $\sigma$ so that \eqref{eq2} holds, is given by%
\begin{equation}
\sigma_{\operatorname*{opt}}=\frac{1}{4}\lVert A\rVert_{2,2,2}^{2}.\label{eq5}%
\end{equation}

The spectral norm of a $3$-hypermatrix is NP-hard to approximate to within a
certain constant factor by \cite[Theorem~1.11]{HL}, which we state here for
easy reference.

\begin{theorem}
[Hillar--Lim]\label{thm:HL} Let $A\in\mathbb{Q}^{n\times n\times n}$ and $N$
be the input size of $A$ in bits. Then it is NP-hard to approximate $\lVert
A\rVert_{2,2,2}$ to within a factor of $1-\varepsilon$ where
\[
\varepsilon=1-\left(  1+\frac{1}{N(N-1)}\right)  ^{-1/2}=\frac{1}%
{2N(N-1)}+O\left(  \frac{1}{N^{4}}\right)  .
\]

\end{theorem}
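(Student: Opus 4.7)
The plan is to reduce the decision version of \textsc{clique} to $(1-\varepsilon)$-approximation of $\lVert A\rVert_{2,2,2}$, leveraging the identity in Theorem~\ref{thm:clique}. Given an input graph $G=(V,E)$ with $n$ vertices and $m$ edges, form the symmetric $A_{G}\in\mathbb{Q}^{(n+m)\times(n+m)\times(n+m)}$ as in Section~\ref{sec:NP}, so that Theorem~\ref{thm:clique} gives
\[
\lVert A_{G}\rVert_{2,2,2}^{2} \;=\; \frac{2}{27}\left(1-\frac{1}{\omega(G)}\right).
\]
An exact evaluation of $\lVert A_{G}\rVert_{2,2,2}$ therefore recovers $\omega(G)$; the question is how coarse an approximation still determines it.

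The consecutive clique values $\omega(G)=k$ and $\omega(G)=k-1$ yield norms $c\sqrt{1-1/k}$ and $c\sqrt{1-1/(k-1)}$ with $c=\sqrt{2/27}$. A $(1-\varepsilon)$-approximation algorithm separates the two cases exactly when $(1-\varepsilon)^{2}(1-1/k)>1-1/(k-1)$, which, using the elementary identity $(k-1)^{2}-k(k-2)=1$, simplifies to $\varepsilon<1-(1+1/(k(k-2)))^{-1/2}$. Since \textsc{clique} at threshold $k$ is NP-complete for any polynomially bounded $k$, the existence of such an approximation algorithm would solve an NP-hard problem.

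It remains to match the functional form of the statement. Under the standard binary encoding the bit-length $N$ of $A_{G}$ is $\Omega((n+m)^{3})$, while $k\leq n$, so $N(N-1)\geq k(k-2)$ and consequently
\[
1-\bigl(1+\tfrac{1}{N(N-1)}\bigr)^{-1/2} \;\leq\; 1-\bigl(1+\tfrac{1}{k(k-2)}\bigr)^{-1/2}.
\]
The theorem's $\varepsilon$ thus lies within the NP-hard range derived above: the approximation it demands is strictly \emph{stricter} than the one ruled out by the CLIQUE gap, and so is itself NP-hard (a tighter ratio would a fortiori yield a looser one).

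The principal obstacle is not conceptual, since all the analytic work is already contained in Theorem~\ref{thm:clique}. The genuine subtlety is the bit-length bookkeeping: one must verify that the standard dense encoding of $A_{G}$ is indeed cubic in $n+m$ (so that $N(N-1)$ dominates $k(k-2)$) and that \textsc{clique} at a polynomially-growing threshold retains its NP-completeness under Karp reducibility. Both are routine but worth stating explicitly, and any implementation detail --- such as sparse encoding or rescaling --- that would violate $N\geq k$ can be neutralised by harmless zero-padding of $A_{G}$ to its full ambient dimension, which preserves the spectral norm.
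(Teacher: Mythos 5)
This theorem is not proved in the paper at all --- it is imported verbatim from Hillar--Lim \cite[Theorem~1.11]{HL} ``for easy reference'' --- so there is no in-paper proof to compare against. Your derivation is nevertheless correct, and it is in substance the original Hillar--Lim argument: both rest on the clique-number identity $\lVert A_{G}\rVert_{2,2,2}^{2}=\tfrac{2}{27}\bigl(1-\tfrac{1}{\omega(G)}\bigr)$ from Theorem~\ref{thm:clique}, exploit the multiplicative gap $\bigl(1-\tfrac{1}{k}\bigr)/\bigl(1-\tfrac{1}{k-1}\bigr)=1+\tfrac{1}{k(k-2)}$ between consecutive clique numbers, and then weaken the resulting threshold $1-\bigl(1+\tfrac{1}{k(k-2)}\bigr)^{-1/2}$ to the stated $1-\bigl(1+\tfrac{1}{N(N-1)}\bigr)^{-1/2}$ via $k\leq n\leq N$, so that the monotonicity is in the right direction (a smaller $\varepsilon$ is a \emph{harder} approximation guarantee, hence still NP-hard). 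Your bookkeeping remarks are the right ones to worry about and are handled correctly; the only loose ends are cosmetic: the case $k=2$ makes $k(k-2)=0$ and your formula degenerate, but there the instance is trivially decidable (the norm is $0$ versus a positive constant), and the strict inequality $N(N-1)>k(k-2)$ needed for the final comparison does hold since $N\geq(n+m)^{3}>k$ under the dense encoding you specify. So the proposal is a sound, self-contained reconstruction of the cited result rather than a genuinely different route.
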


By \eqref{eq5} and Theorem~\ref{thm:HL}, $\sigma_{\operatorname*{opt}}$ is
NP-hard to approximate to within a factor of $\frac{1}{4}(1-\varepsilon)^{2}$
and consequently we have the following inapproximability result.

\begin{corollary}
There is no polynomial time approximation scheme for determining the optimal
self-concordance parameter $\sigma_{\operatorname*{opt}}$ unless
$\mathit{P}=\mathit{NP}$.
\end{corollary}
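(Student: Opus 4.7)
The plan is to argue by contradiction via the identity $\sigma_{\operatorname*{opt}}=\frac{1}{4}\lVert A\rVert_{2,2,2}^{2}$ established in \eqref{eq5}. Given any symmetric $A\in\mathbb{Q}^{n\times n\times n}$ of bit-length $N$, I would form the cubic polynomial $f(x)=\frac{1}{2}x^{\top}x+A(x,x,x)$ on a sufficiently small convex neighbourhood of the origin exactly as in this section; this reduction from $A$ to an instance of the problem of computing $\sigma_{\operatorname*{opt}}$ of some $f$ at a point is clearly polynomial in $N$.

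Now suppose for contradiction that a PTAS exists for $\sigma_{\operatorname*{opt}}$ in the sense of Section~\ref{sec:ct}, i.e., an algorithm that, on input $\delta>0$, outputs $\hat{\sigma}$ with $(1-\delta)\sigma_{\operatorname*{opt}}\leq\hat{\sigma}\leq\sigma_{\operatorname*{opt}}$ in time polynomial in both $N$ and $1/\delta$. By \eqref{eq5}, the quantity $2\sqrt{\hat{\sigma}}$ then satisfies $\sqrt{1-\delta}\,\lVert A\rVert_{2,2,2}\leq 2\sqrt{\hat{\sigma}}\leq\lVert A\rVert_{2,2,2}$, so it is a $\sqrt{1-\delta}$-approximation to the spectral norm of $A$.

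Next, I would set the PTAS tolerance $\delta$ so that $\sqrt{1-\delta}\geq 1-\varepsilon$, where $\varepsilon=1-(1+1/(N(N-1)))^{-1/2}$ is the Hillar--Lim inapproximability threshold of Theorem~\ref{thm:HL}. Solving $(1-\delta)\geq(1-\varepsilon)^{2}$, the choice $\delta=2\varepsilon-\varepsilon^{2}=\Theta(1/N^{2})$ suffices. Since $1/\delta=\Theta(N^{2})$, the PTAS runs in time polynomial in $N$ alone and outputs (after a square root) a $(1-\varepsilon)$-approximation to $\lVert A\rVert_{2,2,2}$, contradicting Theorem~\ref{thm:HL} unless $\mathit{P}=\mathit{NP}$.

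The only technical wrinkle I anticipate is that $2\sqrt{\hat{\sigma}}$ is typically irrational even when $\hat{\sigma}\in\mathbb{Q}$; however, a rational approximation of $\sqrt{\hat{\sigma}}$ accurate to $O(\log N)$ additional bits, obtainable in polynomial time by Newton iteration, lies well within the Hillar--Lim window and therefore still witnesses the forbidden approximation factor, so the reduction carries through with no real difficulty. The substance of the argument is thus entirely packaged in \eqref{eq5} and Theorem~\ref{thm:HL}; the rest is bookkeeping on approximation ratios.
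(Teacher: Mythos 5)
Your proposal is correct and follows essentially the same route as the paper: the paper's proof consists of exactly the observation that \eqref{eq5} together with Theorem~\ref{thm:HL} makes $\sigma_{\operatorname*{opt}}$ NP-hard to approximate to within a factor of $(1-\varepsilon)^{2}$ with $\varepsilon=\Theta(1/N^{2})$, which rules out a scheme polynomial in the input size and $1/\delta$. You have merely spelled out the bookkeeping (the choice $\delta=2\varepsilon-\varepsilon^{2}$ and the rational square-root issue) that the paper leaves implicit.
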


We refer the reader to \cite{HLZ, HS} for more extensive approximability
results and approximation algorithms (that are not \textsc{ptas}). In
particular, the results in \cite{HS} for quartic polynomials would apply to
the optimal second-order self-concordance parameter (see the next section).

\section{Complexity of deciding second-order self-concordance\label{sec:2sc}}

There is also an interesting notion of second-order self-concordance due to
Jarre \cite{J}. This requires that $f\in C^{4}(\Omega)$ and is given by a
condition involving the matrix $\nabla^{2}f(x)\in\mathbb{R}^{n\times n}$ and
the $4$-hypermatrix $\nabla^{4}f(x)\in\mathbb{R}^{n\times n\times n\times n}$.

\begin{definition}
[Jarre]If $\Omega\subseteq\mathbb{R}^{n}$ is a convex open set, then
$f:\Omega\rightarrow\mathbb{R}$ is said to be \textbf{self-concordant of order
two} with parameter $\tau>0$ at $x\in\Omega$ if%
\begin{equation}
\nabla^{2}f(x)(h,h)\geq0 \label{eq:2nd2}%
\end{equation}
and%
\begin{equation}
\nabla^{4}f(x)(h,h,h,h)\leq6\tau\left[  \nabla^{2}f(x)(h,h)\right]  ^{2}
\label{eq:concord2}%
\end{equation}
for all $h\in\mathbb{R}^{n}$; $f$ is self-concordant of order two on $\Omega$
if \eqref{eq:2nd2} and \eqref{eq:concord2} hold for all $x\in\Omega$.
\end{definition}

Note that%
\[
\nabla^{4}f(x)(h,h,h,h)=\sum\nolimits_{i,j,k,l=1}^{n}\frac{\partial^{4}%
f(x)}{\partial x_{i}\partial x_{j}\partial x_{k}\partial x_{l}}h_{i}h_{j}%
h_{k}h_{l},
\]
is a quartic polynomial in $h$ for any fixed $x\in\Omega$.

We follow the same argument in Section~\ref{sec:NP} to show that deciding
\eqref{eq:concord2} is NP-hard. This time the result would be deduced from
Motzkin--Strass Theorem except that for better parallelism with
Section~\ref{sec:NP}, we will use the quartic-maximization-over-sphere form
\eqref{eq4i} instead of the quadratic-maximization-over-simplex form \eqref{eq4b}.

Given a graph $G=(V,E)$ with $n$ vertices and $m$ edges where $n\geq2$ and
$m\geq1$, we define $A_{G}\in\mathbb{R}^{n\times n\times n\times n}$ by%
\[
a_{ijkl}=%
\begin{cases}
1 & i=k\text{, }j=l\text{, and }\{i,j\}\in E,\\
0 & \text{otherwise.}%
\end{cases}
\]
So $A=[a_{ijkl}]_{i,j,k,l=1}^{n}\in\mathbb{Q}^{n\times n\times n\times n}$ is
a symmetric $4$-hypermatrix. Now observe that, as in \eqref{eq4i},%
\[
\max_{\lVert h\rVert=1}A_{G}(h,h,h,h)=\max_{h\in\mathbb{S}^{n-1}}%
\sum\nolimits_{\{i,j\}\in E}h_{i}^{2}h_{j}^{2}=\frac{1}{2}\left(  1-\frac
{1}{\omega(G)}\right)
\]
by Motzkin--Strass Theorem. As in Section~\ref{sec:NP}, given an integer
$k\geq2$, deciding if a $k$-clique exists in $G$ is equivalent to deciding if
$\omega(G)>k-1$. Hence deciding if there exists $h\in\mathbb{R}^{n}$ with%
\begin{equation}
A_{G}(h,h,h,h)>\frac{1}{2}\left(  1-\frac{1}{k-1}\right)  [h^{\top}%
h]^{2}\label{eq10}%
\end{equation}
is NP-hard.

Given a second-order self-concordance parameter $\tau\in\mathbb{Q}$, $\tau>0$,
let%
\begin{equation}
\gamma:=\left[  \frac{1}{12\tau}\left(  1-\frac{1}{k-1}\right)  \right]
^{1/2}.\label{eq:quad}%
\end{equation}
We may now define $f:\Omega\rightarrow\mathbb{R}$ accordingly as the quartic
polynomial%
\[
f(x)=\frac{\gamma}{2}x^{\top}x+A_{G}(x,x,x,x)=\frac{\gamma}{2}\sum
\nolimits_{i=1}^{n}x_{i}^{2}+\sum\nolimits_{i,j,k,l=1}^{n}a_{ijkl}x_{i}%
x_{j}x_{k}x_{l}.
\]
Hence $\nabla^{2}f(0)(h,h)=\gamma h^{\top}h=\gamma\lVert h\rVert_{2}^{2}$ and
$\nabla^{4}f(0)(h,h,h,h)=A_{G}(h,h,h,h)$. Again we choose $\Omega$ to be a
neighborhood of the origin so that $f$ is convex on $\Omega$ as we did in
Section~\ref{sec:NP}. So the function $f$ is second-order self-concordant at
$x=0$ with parameter $\tau$ if and only if%
\begin{equation}
\lbrack A_{G}(h,h,h,h)\leq6\tau\gamma^{2}[h^{\top}h]^{2}=\frac{1}{2}\left(
1-\frac{1}{k-1}\right)  [h^{\top}h]^{2}\label{eq11}%
\end{equation}
is satisfied for all $h\in\mathbb{R}^{n}$. As in Section~\ref{sec:NP}, we
observe that the problem of deciding if there exists an $h\in\mathbb{R}^{n}$
satisfying \eqref{eq10} and the problem of deciding if \eqref{eq11} is
satisfied for all $h\in\mathbb{R}^{n}$ are logical complements. Since the
former is NP-hard, we arrive at the following conclusion:

\begin{theorem}
\label{thm:2sc}Deciding if a quartic polynomial is second-order
self-concordant at the origin is NP-hard under Cook reduction and co-NP-hard
under Karp reduction.
\end{theorem}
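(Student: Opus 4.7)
The plan is to emulate the argument of Section~\ref{sec:NP} essentially verbatim, substituting the cubic condition \eqref{eq:concord} by the quartic condition \eqref{eq:concord2}. The role played by Theorem~\ref{thm:clique} in the cubic case will be played here by the simpler squared-variable reformulation of Motzkin--Straus already recorded in \eqref{eq4i}, which expresses $1-1/\omega(G)$ as twice the maximum of $\sum\nolimits_{\{i,j\}\in E}h_{i}^{2}h_{j}^{2}$ over the unit sphere $\mathbb{S}^{n-1}$; no $27/2$ or $2/27$ rescaling is needed because we no longer split variables into a $(u,w)$ pair.

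First, I would observe that the symmetric $4$-hypermatrix $A_{G}$ defined just before the theorem has been tailored so that $A_{G}(h,h,h,h)=\sum\nolimits_{\{i,j\}\in E}h_{i}^{2}h_{j}^{2}$. Combined with Motzkin--Straus this gives $\max_{\lVert h\rVert=1}A_{G}(h,h,h,h)=\tfrac{1}{2}(1-1/\omega(G))$, so the NP-completeness of \textsc{clique} (restricted to graphs with $n\geq 2$, $m\geq 1$, and $k\geq 2$) translates directly into the NP-hardness of deciding whether there is an $h\in\mathbb{R}^{n}$ satisfying \eqref{eq10}, exactly as in \eqref{eq8}--\eqref{eq9} of Section~\ref{sec:NP}.

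Next, I would feed this into the definition of second-order self-concordance. With $\gamma$ chosen as in \eqref{eq:quad} and $f(x)=\frac{\gamma}{2}x^{\top}x+A_{G}(x,x,x,x)$, a direct Taylor expansion at the origin yields $\nabla^{2}f(0)=\gamma I$, $\nabla^{3}f(0)=0$, and $\nabla^{4}f(0)(h,h,h,h)=A_{G}(h,h,h,h)$. Since $\gamma>0$, $\nabla^{2}f$ is strictly positive definite in a neighborhood of the origin, so $\Omega$ may be shrunk to a ball $B_{\varepsilon}(0)$ on which $f$ is convex and \eqref{eq:2nd2} is automatic. The calibration \eqref{eq:quad} is set so that $6\tau\gamma^{2}=\tfrac{1}{2}(1-1/(k-1))$; therefore \eqref{eq:concord2} at $x=0$ with parameter $\tau$ is equivalent to \eqref{eq11} holding for every $h\in\mathbb{R}^{n}$.

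Finally, I would observe that \eqref{eq10} and \eqref{eq11} are logical complements of one another, and invoke the reducibility discussion of Section~\ref{sec:ct}: under Cook reduction, complementation is free and we obtain NP-hardness of second-order self-concordance; under Karp reduction we obtain co-NP-hardness, exactly mirroring Theorem~\ref{thm:main}. I do not expect any substantive obstacle, since each ingredient is already in place in the section; the only point warranting a remark, as at the end of Section~\ref{sec:NP}, is why one does not upgrade hardness to completeness --- a rational certificate $h$ could in principle require exponentially many bits, so the inequality $A_{G}(h,h,h,h)>\frac{1}{2}(1-1/(k-1))[h^{\top}h]^{2}$ cannot be verified in polynomial bit-operations merely by plugging $h$ in.
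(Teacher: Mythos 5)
Your proposal is correct and follows essentially the same route as the paper's own argument: the same $4$-hypermatrix $A_{G}$, the same use of the squared-variable form \eqref{eq4i} of Motzkin--Straus in place of Theorem~\ref{thm:clique}, the same calibration $6\tau\gamma^{2}=\tfrac{1}{2}(1-1/(k-1))$ via \eqref{eq:quad}, and the same complementation step distinguishing Cook from Karp reducibility. The closing remark on why hardness is not upgraded to completeness is a faithful echo of the discussion at the end of Section~\ref{sec:NP}.
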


It has recently been shown that deciding various seemingly innocuous
properties of quartic polynomials \cite{AOPT, JLZ} all fall into the NP-hard
category, Theorem~\ref{thm:2sc} provides yet another such example.

\section{Conclusion}

As we have mentioned in the introduction, the hardness results here are
intended to shed light on the properties of self-concordance. They do not in
anyway invalidate the usefulness of the notion in practice since there are
basic principles that one may use to construct self-concordant (and
second-order self-concordant) functions for use as barriers in cone
programming --- see ``How to construct self-concordant barriers'' in
\cite[Chapter~5]{NN} for an extensive discussion or ``Self-concordant
calculus'' in \cite[Section~9.6]{BV} for a summary.

Self-concordance and second-order self-concordance are conditions involving
high-order tensors (orders $3$ and $4$ respectively), which is a topic of
great interest to the author. In particular, their NP-hardness serves as yet
reminder of the complexity of tensor problems \cite{HL}.

\section{Acknowledgement}

To be included.

\end{document}